\documentclass{article}
\usepackage[utf8]{inputenc}
\usepackage[english]{babel}
\usepackage{polski}
\usepackage[colorlinks=true, citecolor=blue, linkcolor=blue, linktocpage=true]{hyperref}

\newcommand{\FOLS}{\mathsf{FOL\BOX}}
\newcommand{\A}{\mathcal{A}}
\newcommand{\B}{\mathcal{B}}
\newcommand{\C}{\mathfrak{C}}
\newcommand{\U}{\mathfrak{U}}
\newcommand{\D}{\mathcal{D}}
\newcommand{\W}{\mathcal{W}}

\newcommand{\LFB}{\mathcal{L}(\forall, \BOX)}
\usepackage{amsmath}
\usepackage{amsthm}
\usepackage{amsfonts}
\usepackage{amssymb}
\usepackage{amsbsy}
\usepackage{latexsym}

\usepackage{amsmath}

\usepackage{breqn}

\newtheorem{theorem}{Theorem}
\newtheorem{lemma}{Lemma}

\usepackage{amsmath}

\usepackage{breqn}

\usepackage{stackengine}
\newcommand\BOX{\stackMath\mathbin{\stackinset{c}{0.1ex}{a}{0.4ex}{\mbox{\tiny{$\pmb{\vdash}$}}}{\Box}}}
\newcommand\POS{\stackMath\mathbin{\stackinset{c}{0.2ex}{a}{0.35ex}{\mbox{\tiny{$\pmb{\vdash}$}}}{\Diamond}}}
\title{First-order Logic with\\
 Being a Thesis Modal Operator} 
\author{Marcin Łyczak}
\date{
}
\begin{document}
\maketitle
\begin{abstract}
We introduce syntactic modal operator $\BOX$ for \textit{being a thesis} into first-order logic. This logic is a modern realization of R. Carnap's old ideas on modality, as logical necessity (J. Symb. Logic, 1946) \cite{Ca46}. We place it within the modern framework of quantified modal logic with variable domains. We prove the completeness theorem using a property of a kind of normal form, and the fact that in the canonical frame, the relation on all maximal consistent sets, $R = \{\langle \Gamma, \Delta \rangle\colon \forall A (\BOX A \in \Gamma \Longrightarrow A \in \Delta)\}$, is a universal relation (both are unprovable in known quantified modal logics). The strength of the $\BOX$ operator is a proper extension of modal logic $\mathsf{S5}$. Using completeness, we prove that satisfiability in a model of $\BOX A$ under arbitrary valuation implies that $A$ is a thesis of formulated logic. So we can syntactically define logical entailment and consistency. Our semantics differ from S. Kripke's standard one \cite{Kr63} in syntax, semantics, and interpretation of the necessity operator. We also have free variables, contrary to Kripke's and Carnap's approaches, but our notion of substitution is non-standard (variables inside modalities are not free). All $\BOX$-free first-order theses are provable, as well as the Barcan formula and its converse. Our specific theses are \linebreak[4] $\BOX A \to \forall x A$, $\neg \BOX (x = y)$, $\neg \BOX \neg (x = y)$, $\neg \BOX P(x)$, $\neg \BOX \neg P(x)$. We also have $\POS \exists x A(x) \to \POS A(^{y}/_{x})$, and $\forall x \BOX A(x) \to \BOX A(^{y}/_{x})$, if $A$ is a $\BOX$-free formula.\\
\textbf{Key words}: R. Carnap, being a thesis operator; proper extension of $\mathsf{S5}$
\end{abstract}
\section{Introduction}
In 1946, R. Carnap formulated the \textit{modal functional calculus} ($\mathsf{MFC}$), which is a version of quantified modal logic with an operator intended to express \textit{logical necessity} \cite{Ca46}. Carnap's logic is not a typical modal quantified logic. It does not contain free variables, is stronger than $\mathsf{S5}$ (formulas like $\neg \Box \exists x Fx$ are provable), and identity holds only between the same variables or constants, although identical objects are necessarily identical. In short, $\mathsf{MFC}$ is far from contemporary first-order modal logic. Its models were built from sets of atomic formulas and their negations. There is no valuation and interpretation of predicates. Differences and similarities to Kripke's semantics can be found in \cite{Le20}. Recently, two important results on Carnap's logic have been published. M. Cresswell proved the completeness of a variant of Carnap's axiomatization, keeping Carnap's original semantics almost untouched \cite{Cr14}. M. Madows formulated modern non-modal semantics for Carnap's logic, together with a corresponding tableau system \cite{Ma12}. Madows interpreted logical necessity in a first-order structures as a quantification over all (parametrically definable) subsets of the domain of the structure. We believe that his approach can be considered a faithful modern representation of Carnap's approach to logical necessity. Carnap's necessity, as well as the necessities from the two cited works, actually does not express being a first-order thesis. For example, formulas like $\Box A \to \forall x A$, $\neg \Box \exists xy (x \not= y)$, and $\neg \Box \exists xy (x = y)$ should be provable if we were to think of $\Box$ as a logical necessity.\smallskip\\
As far as we know, there is no formalized version of Carnap's ideas in the literature where logical necessity explicitly expresses being a first-order thesis. We aim to build a first-order modal logic with an operator expressing first-order theses and interpret it in a variant of variable domain semantics. For simplicity, we do not introduce function symbols and constants. To show that our modality $\BOX$ is factually correct, we prove that the satisfiability of any formula $\BOX A$ in any model implies its logical validity and being a thesis, as we prove the completeness theorem.
\section{Language and its semantics}
Let $\mathcal{L}(\forall, \BOX)$ be a first-order language with identity and the modal operator $\BOX$, built with a set of predicates $Pred=\{P^{i}_{j}\}_{i\in I,j\in J}$, and a set of variables $Var$, with $x$ ranging over them.
The language $\LFB$ allows arbitrary iterations and is an ordinary language of quantified modal logic. The difference is that the operator $\BOX$ is read as \textit{is a thesis} (or eventually \textit{is logically necessary}). Other logical symbols are defined in the usual way, and the dual operator $\POS =: \neg \!\BOX\! \neg$ is read as \textit{ is consistent} (or eventually \textit{is logically possible}).\smallskip\\
We say that a formula is $\BOX$-\textit{free} when it does not contain any modalities. We denote the set of the $\BOX$-\textit{free} first order theses as $\mathsf{FOL}$. \smallskip\\
Let $\C$ be the class of all first-order structures with variable domains, i.e.,
$$\A=\langle \W, \{\D_{w}\}_{w\in \W}, I \rangle,$$ where
$\W$ is a non-empty domain of \textit{possible worlds}; $\D_{w}$ is the domain of \textit{objects} of the world $w$; and $I$ is an \textit{interpretation} function that for any $P^{n}_{j}\!\in\! Pred$ and $w\!\in\! \W$, assigns $n$-ary pairs of objects of the world, i.e., $I(P^{n}_{j},w)\!\subseteq\! \D^{n}_{w}$.\smallskip\\
\textit{Valuation} in a given structure is world-dependent and is any function $v$ that assigns, for any variable and possible world of the structure, an object from the domain of the world, i.e., $v(x,w)\in \D_{w}$. \smallskip\\
A \textit{variant of valuation} $v$ \textit{in a world} $w$, which maps $x$ to $a\in \D_{w}$ and has all other values like $v$ on all other variables and worlds except possibly at the variable $x$, is denoted as $v^{a}_{x}(w)$.\smallskip\\
For any structure $\A=\langle \mathcal{W}, \{\D_{w}\}_{w\in \W}, I \rangle$, belonging to $\C$, world $w\in \W$, and valuation $v$ on $\A$, we define satisfaction:
\begin{align*}
\A, v, w \models P^{n}_{j}(x_{1},...,x_{n}) &\Longleftrightarrow \langle v(x_{1},w),...,v(x_{n},w)\rangle\in I({P}^{n}_{j},w);\\
\A, v, w   \models x=y &\Longleftrightarrow  v(x,w)=v(y,w);\\
\A, v, w  \models \neg A &\Longleftrightarrow  \A, v, w\not \models A;\\
\A, v, w  \models A \to B &\Longleftrightarrow  \A, v, w \not \models A \mbox{ or } \A, v, w \models B;\\
\A, v, w  \models \forall x A &\Longleftrightarrow  \A,  v^{a}_{x}(w), w \models A ,  \mbox{ for every } a\in D_{w};
\\
\qquad \qquad \A, v, w  \models \BOX A &\Longleftrightarrow  \A, v', w' \models A,  \mbox{ for every } v' \mbox{ in } \A \mbox{ and } w'\in \W.
\end{align*}
We write $\A \models A$ when $\A, v, w \models A$ for every $v,w$ in $\A$.\smallskip\\
For now, the $\BOX$ operator does not yet express being a first-order thesis. To change this, we have to narrow the class $\C$ to its proper subclass $\U\C \subset \C$, which fulfills the following condition:
\begin{equation}
\tag{$\U\C$}\label{!}
\A\in \U\C \Longleftrightarrow (\A \models A \Longrightarrow \forall_{\B\in \C} \B\models A), \mbox{ for any } \BOX\mbox{-free formula } A.
\end{equation}
The class $\U\C \subseteq \C$ is called \textit{universal}, and its members are called \textit{universal structures}. Let us comment on the class $\U\C$ in which we will interpret the logic we aim to build:
\begin{itemize}\setlength\itemsep{-0.3em}
\item Any universal structure, i.e., fulfilling condition \eqref{!}, can falsify any $\BOX$-free formula that is not a first-order thesis.
\item Any universal structure has an infinite domain of possible worlds $\W$ and an infinite domain of objects $\bigcup \{D_{w}\}_{w\in \W}$.
\item There are universal structures with countable domains of worlds and individuals.
\item Two universal structures may be non-isomorphic and have different properties.
\end{itemize}
For any formula $A \in \LFB$, we say that:
\begin{align}
 \notag A \mbox{ is \textit{logically valid} } \mbox{ \ iff \ }&  \A \!\models\! A \mbox{ for any } \A \in \U\C.
\end{align}
\section{Formal system for being a thesis operator} \label{formal system}
It is a well-known fact that every proper extension of modal logic
$\mathsf{S5}$ is either not closed under the substitution rule or is closed under substitution
but is characterized by a single frame with a finite domain, i.e., a frame with a fixed number of possible worlds \cite{Sc51}. Given that our models are infinite (for worlds and objects), we will formulate slightly different notions of free variable and substitution than the standard one. Our new notions and their counterparts from first-order logic are equivalent for formulas without $\BOX$, but they differ for formulas with $\BOX$ to the one known from quantified modal logic.\newpage
The system $\FOLS$, for first-order logic with a thesis operator, is defined as the smallest set that contains all instantiations of all tautologies of classical logic, and the following schemes of axioms:
\begin{gather}
\tag{$\mathtt{K}$} \BOX (A \to B)\to (\BOX A\to \BOX B),\\
\tag{$\mathtt{T}$} \BOX A \to A,\\
\tag{$\mathtt{5}$} \neg \!\BOX\! A \to \BOX \neg \BOX A,\\
\tag{$\forall 1$} \label{forall1}\forall \pmb{x} A \to A(^{\pmb{y}}/_{\pmb{x}}),
\mbox{ where } A(^{\pmb{y}}/_{\pmb{x}}) \mbox{ is }\forall\!\BOX\!\mbox{\textit{-free substitution}},\\
\tag{$\forall 2$} \label{forall2}\forall \pmb{x} (A \to B)\to  (A \to \forall \pmb{x} B), \mbox{ if } \pmb{x} \mbox{ is not a } \forall\!\BOX\!\mbox{\textit{-free} variable in }A ,\\
\tag{ID} x=x,\\
\tag{=}  x=y\land A(x)\to A(^{y}//_{x}) , \mbox{ if } A \mbox{ is a }\BOX\mbox{-free formula},\\
\tag{$\mathtt{mix}$} \label{mix}\BOX A \to \forall \pmb{x} A,\\
\tag{$\mathtt{!}$}\label{6} \neg \!\BOX\! A, \mbox{ if } A \notin \mathsf{FOL}.
\end{gather}
The set $\FOLS$ is closed under the following rules:
\begin{gather}
\tag{$\mathtt{RG}$}\label{genBOX} A\in \FOLS \Longrightarrow \BOX A \in \FOLS,\\
\tag{$\mathtt{MP}$}A\to B, A\in \FOLS\Longrightarrow B\in  \FOLS.
\end{gather}
The \eqref{mix} schema states that logical theses concern all objects. The generalization rule for universal quantifier $A\in \FOLS\Longrightarrow \forall x A\in \FOLS$ is derivable from \eqref{genBOX} and \eqref{mix}. \eqref{6} with \eqref{genBOX} guarantee that $\BOX A \in \FOLS$ or $\neg \BOX A \in \FOLS$, for any modality-free formula $A$, but as we show later it is a case of any formula $A$.\smallskip\\
An occurrence of a variable $x$ in a formula $A$ is $\forall\BOX$\textit{-bound} if $x$ is in the scope
of quantifier or $x$ occur in the formula $A$ in range of $\BOX$ operator. Otherwise, the occurrence is $\forall\BOX$\textit{-free}.\smallskip\\
A variable $x$ is $\forall\BOX$\!\textit{-free} in the formula $A$ if it has at least one $\forall\BOX$\!-free occurrence.\smallskip\\
A $\forall\BOX$-free substitution $A(^{y}/_{x})$ is the simultaneous replacement of every $\forall\BOX$-free occurrence of $x$ by $y$ in $A$, provided that $y$ is substitable for a $x$ in $A$; where variable $y$ \textit{is substitable for} $\forall\BOX$-free occurrence of $x$ in a formula $A$, if $y$ is not captured by a quantifier when it replaces $x$.\smallskip\\
Let us note that:
\begin{itemize}\setlength\itemsep{-0.2em}
\item In $\neg \!\BOX \! Px\land \exists x (Px)$ both occurrences of $x$ are $\forall\BOX$-bound.
\item  In a formula $Px \land \POS \neg Px$ the first occurrence of a variable $x$ is $\forall\BOX$-free while the second occurrence is  $y$ is
$\forall\BOX$-bound.
\item  If $A=Px \land \POS (Px \land \neg Py)$ then correct  $\forall\BOX$-substitution are: \linebreak[4
] $A(^{y}/_{x})=P y \land \POS (Px \land \neg Py)$ and $A(^{z}/_{x})= P z \land \POS (Px \land \neg Py)$.
\item  If $A=Px \land \POS (Px \land \neg Py))$ then the following is not correct $\forall\BOX$-substitution:  $A(^{x}/_{y}) = P y \land \POS (Py \land \neg Py)$.
\end{itemize}

\newpage
\section{Soundness and completeness}
We start with the satisfaction theorem.
\begin{theorem}\label{satisfation}
If $A\in \FOLS$, then $A$ is logically valid.
\end{theorem}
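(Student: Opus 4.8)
The plan is to prove soundness by induction on the length of a derivation in $\FOLS$, showing that every axiom is logically valid (true in every $\A \in \U\C$ under every valuation and at every world) and that the two rules preserve this property. Validity of the classical tautologies and of $\mathtt{K}$, $\mathtt{T}$, $\mathtt{5}$ is routine, since the satisfaction clause for $\BOX$ quantifies over \emph{all} worlds and valuations of $\A$, so $\BOX$ behaves semantically like an $\mathsf{S5}$-style universal modality internal to each structure; likewise $\mathtt{MP}$ is immediate and $\mathtt{RG}$ holds because $\A \models A$ is precisely the statement that $A$ holds at every world under every valuation, which is what $\A, v, w \models \BOX A$ asserts. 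The identity axioms $\mathtt{ID}$ and $(=)$ are handled as in ordinary first-order logic, using that $A$ is $\BOX$-free so that substitution of provably-equal variables behaves classically; here one verifies the replacement property $A(^{y}//_{x})$ by a subinduction on $\BOX$-free $A$.

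The axioms requiring genuine care are the quantifier axioms $\forall 1$, $\forall 2$, the mixing axiom $\mathtt{mix}$, and above all the axiom $\mathtt{!}$. For $\mathtt{mix}$: if $\A, v, w \models \BOX A$ then $A$ holds at $w$ under \emph{every} valuation, in particular under every variant $v^{a}_{x}(w)$, so $\A, v, w \models \forall \pmb{x} A$; this works for arbitrary $A$ and needs no $\BOX$-free hypothesis. For $\forall 1$ and $\forall 2$ I would prove the key semantic lemma that a $\forall\BOX$-bound occurrence of a variable is irrelevant to satisfaction — more precisely, if $v$ and $v'$ agree on all variables with a $\forall\BOX$-free occurrence in $A$ (at the relevant world, and everywhere for what sits under $\BOX$), then $\A, v, w \models A \iff \A, v', w \models A$ — and then the standard substitution lemma relating $\A, v, w \models A(^{y}/_{x})$ to $\A, v^{v(y,w)}_{x}(w), w \models A$ restricted to $\forall\BOX$-free occurrences. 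The side conditions on $\forall 2$ ($\pmb{x}$ not $\forall\BOX$-free in $A$) and on the substitution being $\forall\BOX$-free and capture-avoiding are exactly what make these lemmas go through.

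The main obstacle is axiom $\mathtt{!}$: we must show that if $A \notin \mathsf{FOL}$ (i.e., $A$ is a $\BOX$-free formula that is \emph{not} a first-order thesis) then $\neg \BOX A$ is logically valid, i.e., for every $\A \in \U\C$ and every $v, w$ we have $\A, v, w \not\models \BOX A$, equivalently $\A \not\models A$. This is precisely where the defining condition $\eqref{!}$ of the class $\U\C$ is used: since $A$ is $\BOX$-free and not a first-order thesis, there is \emph{some} structure $\B \in \C$ with $\B \not\models A$; by the contrapositive of $\eqref{!}$, if $\A \models A$ held for a universal $\A$ then every $\B \in \C$ would satisfy $A$, contradicting the existence of such a countermodel. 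Hence no universal structure satisfies $A$, so $\BOX A$ fails everywhere in every universal structure, giving $\mathtt{!}$. One should also remark that this step tacitly uses the (standard) completeness/soundness of plain first-order logic to identify "$A \notin \mathsf{FOL}$" with "$A$ has a first-order countermodel in $\C$" — a $\BOX$-free formula has a classical countermodel iff it is refutable, and a one-world structure in $\C$ is just a classical structure. With all axioms validated and both rules shown to preserve logical validity, the induction closes and the theorem follows.
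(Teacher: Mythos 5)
Your proposal is correct and follows essentially the same route as the paper: the only non-routine case is axiom $(\mathtt{!})$, and you handle it exactly as the paper does, by combining the contrapositive of the defining condition of $\U\C$ with the completeness of classical first-order logic to produce a one-world countermodel in $\C$. The additional detail you supply for $(\mathtt{mix})$, the quantifier axioms, and the rules matches what the paper dismisses as routine, and your observation that the side conditions on $(\forall 1)$, $(\forall 2)$ are precisely what make the coincidence and substitution lemmas go through is accurate.
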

\begin{proof} 
The logical validity of \eqref{mix} follows directly from the satisfaction conditions for $\BOX$ and $\forall$. We show only the proof of the logical validity of the axioms from schema \eqref{6}, since the rest of the cases are routine. We assume that $\neg \BOX A$ is not logically valid for some $A \notin \mathsf{FOL}$. Thus, we have $\A, v, w \models \BOX A$ for some $\A\in \U\C$, $v$, and $w$ from the domain of $\A$. Now, by satisfaction we have $\A, v, w \models A$ for every $v, w$ in $\A$, and so $\A \models A$. Thus, since $A$ is $\BOX$-free, condition \eqref{!} guarantee that in any structure $\B\in \C$ we have $\B\models A$.
Given $A \notin \mathsf{FOL}$ and the completeness of $\mathsf{FOL}$, we know that there exists an ordinary first-order structure $\mathcal{F}$ and a valuation $v$ such that $A$ is not satisfied in $\mathcal{F}$ under $v$. Thus, there is a structure $\mathcal{B} \in \C$ (e.g., analogous to $\mathcal{F}$ and $v$ with one possible world $w$) such that $\mathcal{B}, v, w \not\models A$, which gives a contradiction.
\end{proof}
Now we prove the completeness of $\FOLS$. \medskip\\
The definitions of ($\FOLS$-)\textit{consistency} and ($\FOLS$-)\textit{maximal consistency} in $\FOLS$ are standard. A set $\Gamma$ of formulas is $\exists$-\textit{saturated} if for any formula $\neg \forall \pmb{x} A \in \Gamma$, there exists a variable $\pmb{y}$ and a $\forall\BOX$-free substitution such that $\neg A (^{\pmb{y}}/_{\pmb{x}}) \in \Gamma$.\smallskip\\
In a standard way, we can prove the following lemma
\begin{lemma}\label{lemmaconsistentset1}
For any $\FOLS$-consistent set $\Sigma$, there exists a set $\Gamma$ such that $\Sigma \subseteq \Gamma$ and $\Gamma$ is $\exists$-\textit{saturated} maximal $\FOLS$-consistent with the usual properties$:$
\begin{align}
\tag{$\Gamma_{\vdash}$}\label{gammaAM} \phantom \Gamma& \FOLS\subseteq \Gamma, \\
\tag{$\Gamma_{\neg}$}\label{gammaneg} A \in \Gamma &\Longleftrightarrow \neg A \not \in \Gamma, \\
\tag{$\Gamma_{\to}$}\label{gammato} A\to B \in \Gamma &\Longleftrightarrow A \not \in \Gamma \mbox{ or } B \in \Gamma.
\end{align}
\end{lemma}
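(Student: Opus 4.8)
The proof is a Lindenbaum-style construction with the extra twist of arranging $\exists$-saturation. The plan is to build $\Gamma$ as the union of an increasing chain of $\FOLS$-consistent sets, simultaneously deciding every formula and providing witnesses for every negated universal statement. The key preparatory move is to secure an unbounded supply of ``fresh'' variables: since $\Sigma$ is an arbitrary consistent set it may already mention all of $Var$, so I would first argue (exactly as in the classical first-order case) that one can reletter, or more cleanly assume $Var$ partitions into the variables occurring in $\Sigma$ and an infinite reserve $Var_0$ of variables not occurring in $\Sigma$; the $\forall\BOX$-free substitution clause in the definition of $\exists$-saturation is what makes this reservation legitimate here.

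\smallskip

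\textbf{Steps.} First, fix an enumeration $A_0, A_1, \dots$ of all formulas of $\LFB$ and set $\Gamma_0 = \Sigma$. At stage $n+1$, given $\FOLS$-consistent $\Gamma_n$, look at $A_n$: if $\Gamma_n \cup \{A_n\}$ is inconsistent, put $\Gamma_n' = \Gamma_n \cup \{\neg A_n\}$; otherwise $\Gamma_n' = \Gamma_n \cup \{A_n\}$. One checks $\Gamma_n'$ is consistent by the standard argument (if both $\Gamma_n\cup\{A_n\}$ and $\Gamma_n\cup\{\neg A_n\}$ were inconsistent, then by propositional reasoning inside $\FOLS$ — available since $\FOLS$ contains all classical tautology instances and is closed under $\mathtt{MP}$ — $\Gamma_n$ itself would be inconsistent). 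Then, if $A_n$ has the form $\neg \forall \pmb{x} B$ and $A_n \in \Gamma_n'$, choose a variable $\pmb{y}$ from the reserve not yet used and not occurring in $\Gamma_n'$ (possible since only finitely many reserve variables have been consumed so far) such that $\pmb{y}$ is substitutable for the $\forall\BOX$-free occurrences of $\pmb{x}$ in $B$, and set $\Gamma_{n+1} = \Gamma_n' \cup \{\neg B(^{\pmb{y}}/_{\pmb{x}})\}$; otherwise $\Gamma_{n+1} = \Gamma_n'$. The consistency of this witnessing step is the one genuinely non-propositional point: if $\Gamma_n' \cup \{\neg B(^{\pmb{y}}/_{\pmb{x}})\}$ were inconsistent, then $\Gamma_n' \vdash B(^{\pmb{y}}/_{\pmb{x}})$, and since $\pmb{y}$ is fresh one generalizes — using that the generalization rule for $\forall$ is derivable from \eqref{genBOX} and \eqref{mix}, together with \eqref{forall2} to pull the quantifier past the $\pmb{y}$-free side formulas — to get $\Gamma_n' \vdash \forall \pmb{y}\, B(^{\pmb{y}}/_{\pmb{x}})$, hence $\Gamma_n' \vdash \forall \pmb{x} B$ (relettering the bound variable back), contradicting $\neg\forall\pmb{x}B \in \Gamma_n'$. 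Finally set $\Gamma = \bigcup_n \Gamma_n$.

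\smallskip

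\textbf{Verification of the listed properties.} Consistency of $\Gamma$ is immediate from the chain being increasing and each $\Gamma_n$ consistent (any $\FOLS$-derivation of a contradiction uses finitely many premises, hence lies in some $\Gamma_n$). Maximality: every formula $A_n$ was decided at stage $n+1$, so for each $A$ exactly one of $A, \neg A$ is in $\Gamma$; this gives both \eqref{gammaneg} and, combined with the tautologies $A\to(A'\to B)$-style reasoning, \eqref{gammato}. Property \eqref{gammaAM}, that $\FOLS \subseteq \Gamma$, holds because if $A \in \FOLS$ then $\neg A \notin \Gamma$ (else $\Gamma$ would be inconsistent, as $\Gamma \ni \neg A$ and $\FOLS \ni A$), so by \eqref{gammaneg} we get $A \in \Gamma$. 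And $\exists$-saturation is exactly what the witnessing step at each stage guarantees: whenever $\neg\forall\pmb{x}B \in \Gamma$, it entered at some stage $n+1$ as $A_n$, at which point $\neg B(^{\pmb{y}}/_{\pmb{x}})$ was added.

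\smallskip

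\textbf{Main obstacle.} The only delicate point, and the reason this ``standard'' lemma deserves a name-check in the paper, is the witnessing step: one must be sure that a suitable fresh variable $\pmb{y}$ always exists \emph{and} that the generalization argument goes through with the paper's non-standard notion of $\forall\BOX$-free substitution and $\forall\BOX$-free variable. Concretely, $\pmb{y}$ must be substitutable for the $\forall\BOX$-free occurrences of $\pmb{x}$ in $B$ (no capture) and must not be $\forall\BOX$-free in any side formula used in the derivation — both are arranged by drawing $\pmb{y}$ from the infinite reserve untouched so far. Given that, axiom \eqref{forall2}'s proviso (``$\pmb{x}$ not $\forall\BOX$-free in $A$'') is met, and the derived $\forall$-generalization plus $\alpha$-renaming of the bound variable closes the loop. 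Everything else is the textbook Lindenbaum argument.
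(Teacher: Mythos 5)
Your proof is correct and is precisely the standard Lindenbaum--Henkin chain construction that the paper itself invokes without writing out (``In a standard way, we can prove the following lemma''). You also correctly isolate the only point where the paper's non-standard apparatus matters --- reserving fresh variables and pushing the quantifier through with $(\forall 2)$, which applies because the fresh witness is not $\forall\BOX$-free in the side formulas --- so nothing needs to be added.
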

Since $\BOX$ is a normal modality we also know that
\begin{lemma}\label{lemmaconsistentset2}
If $\Gamma$ is a $\FOLS$-consistent set and $\POS A \in \Gamma$, then $\{B \colon \BOX B \in \Gamma\} \cup \{A\}$ is also a $\FOLS$-consistent set.
\end{lemma}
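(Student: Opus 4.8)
The plan is a proof by contradiction using only the normal-modal-logic core of $\FOLS$: the axiom $\mathtt{K}$, the necessitation rule $\mathtt{RG}$, and classical propositional reasoning — none of the identity axioms, the quantifier axioms, or the special schemes \eqref{mix} and \eqref{6} will be needed. Suppose $\Delta := \{B \colon \BOX B \in \Gamma\} \cup \{A\}$ were $\FOLS$-inconsistent. Since derivations are finite and enlarging a refutable finite set keeps it refutable, there are $B_1,\dots,B_n$ (with $n \geq 0$) such that $\BOX B_i \in \Gamma$ for each $i$ and $\neg(B_1 \land \dots \land B_n \land A) \in \FOLS$; by propositional logic this rewrites as $(B_1 \land \dots \land B_n) \to \neg A \in \FOLS$.

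Next I would apply $\mathtt{RG}$ to obtain $\BOX\big((B_1 \land \dots \land B_n) \to \neg A\big) \in \FOLS$ and then move the box inward using two standard consequences of $\mathtt{K}$: that $\BOX$ respects implication, i.e. $\BOX(X \to Y) \to (\BOX X \to \BOX Y) \in \FOLS$, and that it distributes over finite conjunction, i.e. $(\BOX X \land \BOX Y) \leftrightarrow \BOX(X \land Y) \in \FOLS$. Chaining these yields $(\BOX B_1 \land \dots \land \BOX B_n) \to \BOX \neg A \in \FOLS$; in the degenerate case $n = 0$ this step is vacuous and we already have $\BOX \neg A \in \FOLS$ from $\mathtt{RG}$ applied to $\neg A$.

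Finally, since every $\BOX B_i$ belongs to $\Gamma$, modus ponens shows that $\BOX \neg A$ is derivable from $\Gamma$. But $\POS A \in \Gamma$ means, by definition of $\POS$, that $\neg \BOX \neg A \in \Gamma$, so $\neg \BOX \neg A$ is derivable from $\Gamma$ as well. Hence $\Gamma$ derives a formula and its negation, contradicting the $\FOLS$-consistency of $\Gamma$; therefore $\Delta$ is $\FOLS$-consistent.

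I do not expect a genuine obstacle here: this is the familiar ``diamond lemma'' for normal modal logics, and it goes through verbatim because $\FOLS$ contains $\mathtt{K}$ and is closed under $\mathtt{RG}$. The only points that deserve a moment's care are the bookkeeping that reduces the inconsistency of $\Delta$ to a single theorem of the shape $(B_1 \land \dots \land B_n) \to \neg A$ with all boxed antecedents already in $\Gamma$, the trivial case $n = 0$, and the reminder that consequence from $\Gamma$ is the local relation — $\mathtt{RG}$ is used only on theorems of $\FOLS$, never on members of $\Gamma$ — which is precisely what makes the middle step legitimate.
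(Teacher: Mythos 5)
Your proof is correct and is precisely the standard ``existence lemma'' argument for normal modal logics that the paper is invoking when it states this lemma with only the remark that $\BOX$ is a normal modality and omits the details; your reduction to a finite theorem $(B_1 \land \dots \land B_n) \to \neg A$, the use of $\mathtt{RG}$ and $\mathtt{K}$, and the handling of the $n=0$ case are all sound in $\FOLS$. No changes needed.
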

Now we introduce a kind of normal form of
formulas:
\begin{equation*}
\begin{aligned}
D \text{ is }\textit{an elementary disjunction } \text{ iff }& D :=A\lor \POS B \lor  \BOX C_{1}\lor \dots \lor \BOX C_{n},\\
& \text{  for some } \BOX\mbox{-free formulas } A, B, C_{1},\dots C_{n} ;\smallskip\\
F\mbox{ is }\textit{a conjunctive form} \mbox{ iff } & F \mbox{
is a conjunction of elementary disjunctions}.
\end{aligned}
\end{equation*}
Let $\mathcal{MC}$ be the family of all $\exists$-\textit{saturated} maximal $\FOLS$-consistent sets.
\begin{lemma}\label{lemmaBOXinWc}
For any maximal $\FOLS$-consistent sets $\Gamma, \Delta\in \mathcal{MC}$, $\BOX$-free formula $A$, and any elementary disjunction $D\colon$
\begin{gather}
\tag{$i$}\label{i} \forall_{\Gamma,\Delta\in \mathcal{MC}}(\BOX A\in \Gamma \Longleftrightarrow \BOX A\in \Delta);\\
\tag{$ii$}\label{ii} \forall_{\Gamma,\Delta\in \mathcal{MC}}(\POS A\in \Gamma \Longleftrightarrow \POS A\in \Delta);\\
\tag{$iii$}\label{iii} \forall_{\Gamma,\Delta\in \mathcal{MC}}(\BOX D\in \Gamma \Longleftrightarrow \BOX D\in \Delta).
\end{gather}
\end{lemma}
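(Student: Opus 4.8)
The plan is to prove the three clauses in order, with \eqref{i} doing the real work and \eqref{ii}, \eqref{iii} reduced to it. For \eqref{i}, fix a $\BOX$-free $A$ and split on whether $A\in\mathsf{FOL}$. If $A\in\mathsf{FOL}$ then $A\in\FOLS$ (every $\BOX$-free first-order thesis is a thesis of $\FOLS$), so rule \eqref{genBOX} gives $\BOX A\in\FOLS$, hence \eqref{gammaAM} puts $\BOX A$ in every member of $\mathcal{MC}$ and both sides of the biconditional are true. If $A\notin\mathsf{FOL}$ then axiom scheme \eqref{6} gives $\neg\BOX A\in\FOLS$, so $\neg\BOX A$ lies in every member of $\mathcal{MC}$ by \eqref{gammaAM}, and \eqref{gammaneg} forces $\BOX A\notin\Gamma$, $\BOX A\notin\Delta$, so both sides are false. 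Either way \eqref{i} holds; note this already shows $\BOX A\in\FOLS$ or $\neg\BOX A\in\FOLS$ for $\BOX$-free $A$. Clause \eqref{ii} is then immediate by duality: $\POS A$ abbreviates $\neg\BOX\neg A$ and $\neg A$ is $\BOX$-free when $A$ is, so $\POS A\in\Gamma\iff\BOX\neg A\notin\Gamma\iff\BOX\neg A\notin\Delta\iff\POS A\in\Delta$ using \eqref{gammaneg} and \eqref{i} applied to $\neg A$.

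For \eqref{iii}, the key step is that for an elementary disjunction $D:=A\lor\POS B\lor\BOX C_{1}\lor\dots\lor\BOX C_{n}$ the biconditional
\[
\BOX D \ \leftrightarrow\ \bigl(\BOX A\lor\POS B\lor\BOX C_{1}\lor\dots\lor\BOX C_{n}\bigr)
\]
is a thesis of $\FOLS$. This is the point where the $\mathsf{S5}$-strength of $\BOX$ is used to collapse iterated modalities, and it is precisely why the normal form was introduced. For the direction ``$\leftarrow$'' I would check that each modal disjunct already forces $\BOX D$: $\BOX A\to\BOX D$ by \eqref{genBOX} and $\mathtt{K}$ from the tautology $A\to D$; $\POS B\to\BOX\POS B\to\BOX D$ using axiom $\mathtt{5}$ and $\POS B\to D$; and $\BOX C_{i}\to\BOX\BOX C_{i}\to\BOX D$ using the $\mathtt{4}$-scheme $\BOX C\to\BOX\BOX C$ (derivable from $\mathtt{T}$ and $\mathtt{5}$) together with $\BOX C_{i}\to D$. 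For the direction ``$\to$'' I would argue contrapositively: from $\neg\BOX A$, $\BOX\neg B$ and each $\neg\BOX C_{i}$ — i.e. $\POS\neg A$, $\BOX\neg B$, each $\POS\neg C_{i}$ — push modalities outward via $\mathtt{4}$ and $\mathtt{5}$ to get $\POS\neg A\land\BOX\bigl(\BOX\neg B\land\bigwedge_{i}\POS\neg C_{i}\bigr)$ and then apply the normal-modal principle $\POS p\land\BOX q\to\POS(p\land q)$ to obtain $\POS\neg D$, i.e. $\neg\BOX D$. With this equivalence in hand, maximality together with \eqref{gammato} and \eqref{gammaneg} reduces ``$\BOX D\in\Gamma$'' to membership in $\Gamma$ of one of $\BOX A,\POS B,\BOX C_{1},\dots,\BOX C_{n}$; each of these has the form treated by \eqref{i} or \eqref{ii}, hence the same status in $\Gamma$ as in $\Delta$, giving $\BOX D\in\Gamma\iff\BOX D\in\Delta$.

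The only place needing more than bookkeeping with the canonical-set properties of Lemma~\ref{lemmaconsistentset1} is the $\FOLS$-provability of that displayed biconditional, so that is what I would call the main obstacle — though I expect no real difficulty, since $\FOLS$ contains all of $\mathsf{S5}$-style modal reasoning and the derivation is short and standard.
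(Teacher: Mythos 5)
Your proposal is correct and follows essentially the same route as the paper: clause \eqref{i} rests on axiom ($\mathtt{!}$) together with ($\mathtt{RG}$) deciding $\BOX A$ in $\FOLS$ for every $\BOX$-free $A$, clause \eqref{ii} is obtained by duality from \eqref{i}, and clause \eqref{iii} uses the $\mathsf{S5}$ collapse $\BOX(A \lor \POS B \lor \BOX C_{1}\lor\dots\lor\BOX C_{n})\leftrightarrow(\BOX A \lor \POS B \lor \BOX C_{1}\lor\dots\lor\BOX C_{n})$ to reduce $\BOX D$ to disjuncts already covered by \eqref{i} and \eqref{ii}. The only difference is organizational: the paper argues indirectly, extracting an auxiliary set $\Delta_{0}$ via Lemma~\ref{lemmaconsistentset2}, whereas your direct case split on whether $A\in\mathsf{FOL}$ (and your explicit two-sided biconditional in \eqref{iii}) makes that detour unnecessary.
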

\begin{proof}
We start with \eqref{i} and for any $\BOX$-free formula $A$ and maximal \linebreak[4] $\FOLS$-consistent sets $\Gamma,\Delta$ we assume indirectly that $\BOX A\in \Gamma,$ and $\BOX A\notin \Delta$. Since $\Delta$ is a maximal consistent set we have $\POS \neg A\in \Delta$ by \eqref{gammaneg}. The latter using Lemmas \ref{lemmaconsistentset2} and \ref{lemmaconsistentset1} gives that there is a maximal consistent set $\Delta_{0}\in \mathcal{MC}$ such that $\neg A \in \Delta_{0}$, and next $A \notin \Delta_{0}$. From \eqref{gammaAM} we know that $\FOLS\subseteq \Delta_{0}$, so $A\notin \FOLS$, since $A \notin \Delta_{0}$. The latter applying axiom \eqref{6} gives $\neg \BOX A\in \FOLS$, since $A$ is $\BOX$-free. Having $\neg \!\BOX \! A\in \FOLS$ we apply again \eqref{gammaAM} but this time to $\Gamma$ and obtain $\neg \BOX A\in \Gamma$  which gives a contradiction with the first assumption. \smallskip\\
For \eqref{ii} we just use \eqref{i} with $A/\neg A$ and apply \ref{gammaAM} of Lemma \ref{lemmaconsistentset1}.\smallskip\\
For \eqref{iii} we assume indirectly that  $\BOX D\in \Gamma$, and $\BOX D\notin \Delta$, for some $\Gamma,\Delta\in \mathcal{MC}$.  $\BOX D\notin \Delta$ gives $\POS \neg D \in \Delta$ by \eqref{gammaneg} so applying Lemma \ref{lemmaconsistentset2} there is a maximal consistent set $\Delta_{0}\in \mathcal{MC}$ such that $D\notin \Delta_{0}$. Let $D:=A \lor \POS B \lor \BOX C_{1}\lor \dots \lor\BOX C_{n}$, since $D$ is an elementary disjunction. We have $A\notin \Delta_{0}$ and $\neg \POS \!B, \neg \BOX C_{1}, \dots, \neg \BOX  C_{n}\in \Delta_{0}$, by maximal consistency of $\Delta_{0}$ and classical logic. This using \eqref{i} and \eqref{ii} of Lemma \ref{lemmaBOXinWc} $\neg \POS \!B, \neg \BOX C_{1}, \dots, \neg \BOX  C_{n}\in \Delta_{0}$ implies \\
\indent ($\star$): $\forall_{\zeta \in \mathcal{MC}}(\neg \POS \!B, \neg \BOX C_{1}, \dots, \neg \BOX  C_{n}\in \zeta)$, \\
since $B, C_{j\ge n}$ is $\BOX$-free formula. The first assumption  $\BOX D\in \Gamma$, using \linebreak[4] $D:=A \lor \POS B \lor \BOX C_{1}\lor \dots \lor\BOX C_{n}$, gives us $\BOX (A \lor \POS B \lor \BOX C_{1}\lor \dots \lor\BOX C_{n} )\in \Gamma$. The latter entails $\BOX A \lor \POS B \lor \BOX C_{1}\lor \dots \lor\BOX C_{n} \in \Gamma$ because a $\mathsf{S5}$ property of $\BOX$ ($\BOX (A \lor \BOX B) \leftrightarrow (\BOX A \lor \BOX B)$) and maximal consistency of $\Gamma$. Thus using ($\star$) we obtain $\BOX A\in \Gamma$. Having $\BOX A\in \Gamma$ we obtain $\BOX A\in \Delta_{0}$ with the use of \eqref{i} from Lemma \ref{lemmaBOXinWc}, because $A$ is $\BOX$-free formula. However, $\BOX A\in \Delta_{0}$ with earlier obtained $A\notin \Delta_{0}$ implies that $\Delta_{0}$ is not consistent set.
\end{proof}
To prove that every $\LFB$ formula has equivalent conjunctive form we use the fact that $\BOX$ has properties of modal logic $\mathsf{S5}$, and some specific $\FOLS$ theses.
\begin{lemma} The following formulas are provable in $\FOLS$:
\begin{gather}
\tag{$\mathtt{1a}$}\label{1a} \BOX (A \lor \BOX B) \leftrightarrow (\BOX A \lor \BOX B);\\
\tag{$\mathtt{1b}$} \label{1b}\exists x \! \BOX \! A \leftrightarrow \forall x \! \BOX \! A  \leftrightarrow \BOX A;\\
\tag{$\mathtt{1c}$}  \exists x \! \POS \! A   \leftrightarrow\forall x \! \POS \! A  \leftrightarrow \POS A;\\
\tag{$\mathtt{1d}$}\label{1d} \forall x (A \lor \BOX B) \leftrightarrow (\forall x A \lor \BOX B).
\end{gather}
\end{lemma}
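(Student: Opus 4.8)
The plan is to handle the four items separately, each being routine once two standing facts are in place. First, $\BOX$ is a normal modality satisfying $\mathtt{K},\mathtt{T},\mathtt{5}$ and closed under \eqref{genBOX}, so it validates every $\mathsf{S5}$-theorem; in particular we may freely use $\BOX$-monotonicity, the reduction law $\BOX B\leftrightarrow\BOX\BOX B$, and $\neg\BOX B\to\BOX\neg\BOX B$ (which is just $\mathtt{5}$). Second, the universal-quantifier generalisation rule $A\in\FOLS\Rightarrow\forall x A\in\FOLS$ is derivable (from \eqref{genBOX} and \eqref{mix}), and together with \eqref{forall1} and \eqref{forall2} it reproduces the usual first-order manipulations of $\forall$ — monotonicity, and the distribution law $\forall x(C\to D)\to(\forall x C\to\forall x D)$ — by the standard derivations, which never reach inside a $\BOX$ and so are unaffected by the non-standard notion of substitution. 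The single point requiring attention is this: any subformula lying in the scope of a $\BOX$ contains \emph{no} $\forall\BOX$-free occurrence of any variable, so when we apply \eqref{forall1} to a $\BOX$-prefixed formula the substitution is vacuous, and \eqref{forall2} applies to it without its side condition biting.

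\eqref{1a} is pure $\mathsf{S5}$. For $\leftarrow$: from the tautology $A\to(A\lor\BOX B)$ and $\mathtt{K}$ we get $\BOX A\to\BOX(A\lor\BOX B)$; from the tautology $\BOX B\to(A\lor\BOX B)$ and $\mathtt{K}$ we get $\BOX\BOX B\to\BOX(A\lor\BOX B)$, which with $\BOX B\to\BOX\BOX B$ gives $\BOX B\to\BOX(A\lor\BOX B)$; then combine by cases. For $\to$: it suffices to prove $\BOX(A\lor\BOX B)\land\neg\BOX B\to\BOX A$. From $\mathtt{5}$, $\neg\BOX B\to\BOX\neg\BOX B$; from the tautology $(A\lor\BOX B)\to(\neg\BOX B\to A)$ and $\mathtt{K}$, $\BOX(A\lor\BOX B)\to(\BOX\neg\BOX B\to\BOX A)$; chaining gives the claim.

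\eqref{1b} is the cycle $\BOX A\to\forall x\BOX A\to\BOX A$ and $\BOX A\to\exists x\BOX A\to\BOX A$. The implication $\forall x\BOX A\to\BOX A$ is an instance of \eqref{forall1}, since $(\BOX A)(^{y}/_{x})$ is $\BOX A$; contraposing the analogous instance $\forall x\neg\BOX A\to\neg\BOX A$ gives $\BOX A\to\exists x\BOX A$. For the converses, $\BOX A\to\BOX\BOX A$ followed by \eqref{mix} gives $\BOX A\to\forall x\BOX A$, while $\neg\BOX A\to\BOX\neg\BOX A$ ($=\mathtt{5}$) followed by \eqref{mix} gives $\neg\BOX A\to\forall x\neg\BOX A$, i.e. $\exists x\BOX A\to\BOX A$. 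Item $\mathtt{1c}$ then follows from \eqref{1b} with $\neg A$ for $A$: replace $\BOX\neg A$ by $\POS A$ (definition of $\POS$) and use the dualities $\neg\exists x\,C\leftrightarrow\forall x\neg C$ and $\neg\forall x\,C\leftrightarrow\exists x\neg C$.

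For \eqref{1d}: since $x$ is not $\forall\BOX$-free in $\BOX B$, \eqref{forall2} gives $\forall x(\BOX B\to(A\lor\BOX B))\to(\BOX B\to\forall x(A\lor\BOX B))$, and its antecedent is in $\FOLS$ by generalising the underlying tautology; with $\forall x A\to\forall x(A\lor\BOX B)$ ($\forall$-monotonicity) this yields $\leftarrow$. For $\to$: $\forall$-monotonicity turns $\forall x(A\lor\BOX B)$ into $\forall x(\neg\BOX B\to A)$ (the matrices are tautologically equivalent), and then \eqref{forall2} — applicable because $x$ is not $\forall\BOX$-free in $\neg\BOX B$ — gives $\neg\BOX B\to\forall x A$, i.e. $\forall x(A\lor\BOX B)\to(\forall x A\lor\BOX B)$. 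I expect no genuine obstacle here; the only recurring thing to verify is that each $\BOX$-prefixed subformula counts as having no $\forall\BOX$-free variable — immediate from the definitions — so that \eqref{forall1} and \eqref{forall2} may be used on it, and that the derived $\forall$-rules carry over unchanged from first-order logic since none of their derivations crosses a $\BOX$.
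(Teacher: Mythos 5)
Your proof is correct and is essentially the argument the paper has in mind: the paper itself omits these derivations as ``elementary,'' noting only that \eqref{1a} is an $\mathsf{S5}$ theorem, and your derivations of \eqref{1b}--\eqref{1d} from $\mathtt{5}$, \eqref{mix}, and the key observation that every occurrence of a variable inside a $\BOX$ is $\forall\BOX$-bound (so that \eqref{forall1} applies vacuously to $\BOX$-prefixed formulas and the side condition of \eqref{forall2} is automatically met) are exactly the intended ones. The one point worth making explicit, which you handle correctly, is that the derived $\forall$-rules (monotonicity and distribution) go through under the non-standard substitution because their standard proofs only ever instantiate $x$ by itself and only apply \eqref{forall2} to formulas whose $x$-occurrences are already $\forall\BOX$-bound.
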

The first one, as we mentioned in the Proof of Lemma \ref{lemmaBOXinWc}, is provable in modal logic $\mathsf{S5}$, thus also in $\FOLS$. The next three are not probable in known quantifier modal logics. We omit elementary derivations of \eqref{1b}-\eqref{1d} in $\FOLS$ and use them to show that the logic $\FOLS$ has a property of equivalent conjunctive form.
\begin{lemma}\label{form}
For any $A\in \LFB$ there is a conjunctive form equivalent to $A$.
\end{lemma}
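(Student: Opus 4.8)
The plan is to prove the lemma by induction on the construction of $A$, run simultaneously with the dual statement: that $A$ has, besides an equivalent conjunctive form, an equivalent \emph{disjunctive form}, i.e.\ an equivalent disjunction of \emph{elementary conjunctions} $\alpha\land\BOX\beta\land\POS\gamma_{1}\land\dots\land\POS\gamma_{m}$ with $\alpha,\beta,\gamma_{i}$ all $\BOX$-free. The two statements must be carried together because negation interchanges the two shapes: by De Morgan and the $\mathsf{S5}$ dualities $\neg\POS X\leftrightarrow\BOX\neg X$, $\neg\BOX X\leftrightarrow\POS\neg X$, the negation of an elementary disjunction is provably an elementary conjunction and conversely, so $\neg A$ inherits a conjunctive form from a disjunctive form of $A$ and vice versa. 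For the base case an atomic, hence $\BOX$-free, formula $\alpha$ already has both shapes up to provable equivalence: $\alpha\leftrightarrow\alpha\lor\POS(x\neq x)$ because $\BOX(x=x)\in\FOLS$ by axiom $\mathtt{ID}$ and \eqref{genBOX}, so $\POS(x\neq x)$ is refutable, and dually $\alpha\leftrightarrow\alpha\land\BOX(x=x)$; missing or superfluous components in either shape are always filled with such provably true or false pieces.

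For the propositional connectives it suffices to handle $\lor$ and $\land$ ($\to$ then follows from $\neg$ and $\lor$). A disjunction of two conjunctive forms is distributed into a conjunction each of whose members is a disjunction of two old elementary disjunctions, and such a disjunction is restored to a single elementary disjunction by merging the two $\BOX$-free disjuncts with $\lor$, merging the two $\POS$-disjuncts via the $\mathsf{S5}$ law $\POS\beta_{1}\lor\POS\beta_{2}\leftrightarrow\POS(\beta_{1}\lor\beta_{2})$, and simply listing all the $\BOX\gamma$-disjuncts; dually a conjunction of two disjunctive forms is handled using $\BOX\gamma_{1}\land\BOX\gamma_{2}\leftrightarrow\BOX(\gamma_{1}\land\gamma_{2})$. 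In effect the class of formulas provably equivalent to a conjunctive form coincides with the class provably equivalent to a disjunctive form, and this class is closed under the Boolean operations. The modal step uses only that $\BOX$ is $\mathsf{S5}$ together with \eqref{1a}: as $\BOX$ distributes over conjunction, $\BOX\!\bigwedge_{j}D_{j}\leftrightarrow\bigwedge_{j}\BOX D_{j}$, so it is enough to make each $\BOX D_{j}$ elementary; rewriting $\POS\beta\leftrightarrow\BOX\POS\beta$ and applying \eqref{1a} repeatedly pulls every $\BOX$-headed disjunct out of the box, turning $\BOX D_{j}$ into $\BOX\alpha\lor\POS\beta\lor\BOX\gamma_{1}\lor\dots\lor\BOX\gamma_{n}$, which is an elementary disjunction once $\BOX\alpha$ is counted among the $\BOX$-disjuncts and a refutable $\BOX$-free disjunct is prepended.

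The quantifier case is the one I expect to be the main obstacle, and it is exactly where the non-$\mathsf{S5}$ theses \eqref{1b}, $(\mathtt{1c})$ and \eqref{1d} are indispensable. For $\forall xA$, with $A$ equivalent to $\bigwedge_{j}D_{j}$, one has $\forall x\!\bigwedge_{j}D_{j}\leftrightarrow\bigwedge_{j}\forall xD_{j}$, so it suffices to make each $\forall xD_{j}$ elementary; by \eqref{1d} the quantifier slips past every $\BOX\gamma$-disjunct, leaving $\forall x(\alpha\lor\POS\beta)$, and the crux is the derived equivalence $\forall x(\alpha\lor\POS\beta)\leftrightarrow(\forall x\alpha\lor\POS\beta)$. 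In the forward direction, under the hypothesis $\neg\POS\beta$, which is $\BOX\neg\beta$, \eqref{1b} gives $\forall x\BOX\neg\beta$, i.e.\ $\forall x\neg\POS\beta$, and then $\forall x(\alpha\lor\POS\beta)$ together with $\forall x\neg\POS\beta$ yields $\forall x\bigl((\alpha\lor\POS\beta)\land\neg\POS\beta\bigr)$ and hence $\forall x\alpha$; in the backward direction $(\mathtt{1c})$ lets $\forall x$ commute with $\POS\beta$ and $\forall x$ distributes into $\alpha\lor\POS\beta$ by \eqref{forall2}, which is applicable because $x$ has no $\forall\BOX$-free occurrence in $\POS\beta$. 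Thus $\forall xD_{j}$ is elementary, and $\forall xA$ acquires a conjunctive form; the case $\exists xA$ is treated dually, or reduced to $\forall$ and $\neg$, via the mirror equivalence $\exists x(\alpha\land\POS\beta)\leftrightarrow(\exists x\alpha\land\POS\beta)$. Assembling the cases finishes the induction, and the conjunctive form obtained at the root of $A$ is the one required.
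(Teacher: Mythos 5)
Your proof is correct and follows essentially the same route as the paper: induction on the structure of $A$, distributing $\BOX$ and $\forall$ over the conjunctive form of the subformula and then restoring elementary disjunctions via \eqref{1a} and \eqref{1b}--\eqref{1d}. You simply spell out what the paper leaves as ``standard'' --- the simultaneous dual (disjunctive-form) induction for negation, the padding with provably true/false components, and the explicit handling of the $\POS$-disjunct via $\POS\beta\leftrightarrow\BOX\POS\beta$ --- all of which is sound.
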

\begin{proof}
Proof by induction on $A$. The boolean steps are standard. Let $A=\BOX B$
and $B'= B_{1} \land \ldots \land B_{n}$ be conjunctive form of $B$. $\BOX$ is a normal modality thus $A\leftrightarrow \BOX B_{1} \land \ldots \land \BOX  B_{n}$ and \eqref{1a} guarantees that every $\BOX B_{i}$ have equivalent elementary disjunction, thus $A$ has an equivalent conjunctive form.
Let $A=\forall x B$
and $B'= B_{1} \land \ldots \land  B_{n}$ be a conjunctive form of $B$. We have \linebreak[4] $A\leftrightarrow \forall x B_{1} \land \ldots \land \forall x B_{n}$ and
\eqref{1d} guarantees that every $\forall x B_{i}$ have equivalent elementary disjunction, thus $A$ has an equivalent conjunctive form.
\end{proof}
\noindent Let  $R =\{\langle\Gamma, \Delta\rangle\colon
\forall_{A\in \mathcal{L}(\forall,\BOX)} ( \BOX A\!\in \!\Gamma \! \Longrightarrow \! A \!\in\! \Delta)\}$ be accessibility relation in $\mathcal{MC}$.
\begin{theorem}\label{universalR}
$(\mathcal{MC}, R)$ is a modal frame with a universal relation.
\end{theorem}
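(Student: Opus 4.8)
The plan is to prove the stronger fact that $R = \mathcal{MC}\times\mathcal{MC}$, i.e.\ that $R$ is \emph{universal}; this in particular makes $(\mathcal{MC},R)$ a frame, since $\mathcal{MC}\neq\emptyset$ follows from the consistency of $\FOLS$ (a consequence of Theorem~\ref{satisfation}, or of Lemma~\ref{lemmaconsistentset1} applied to the empty set). So I would fix arbitrary $\Gamma,\Delta\in\mathcal{MC}$ and an arbitrary formula $A\in\LFB$ with $\BOX A\in\Gamma$, and show $A\in\Delta$; by definition of $R$ this is all that is needed.

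First I would pass to normal form. By Lemma~\ref{form} there is a conjunctive form $F = D_{1}\land\dots\land D_{k}$ with $D_{i}$ elementary disjunctions and $\FOLS\vdash A\leftrightarrow F$; since $\FOLS$ contains $(\mathtt{K})$ and is closed under $(\mathtt{RG})$ and $(\mathtt{MP})$, it also proves the congruence $\BOX A\leftrightarrow\BOX F$ and the normal-logic distribution law $\BOX F\leftrightarrow(\BOX D_{1}\land\dots\land\BOX D_{k})$. All of these theorems lie in $\Gamma$ and in $\Delta$ by \eqref{gammaAM}. Hence from $\BOX A\in\Gamma$, using maximal consistency of $\Gamma$ and \eqref{gammato}, I get $\BOX D_{i}\in\Gamma$ for every $i\le k$.

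Now comes the decisive step: each $D_{i}$ is an \emph{elementary disjunction}, so Lemma~\ref{lemmaBOXinWc}\eqref{iii} applies and yields $\BOX D_{i}\in\Delta$ for every $i\le k$. Applying axiom $(\mathtt{T})$, which belongs to $\FOLS\subseteq\Delta$, together with maximal consistency of $\Delta$, gives $D_{i}\in\Delta$ for each $i$, hence $F = D_{1}\land\dots\land D_{k}\in\Delta$, and finally $A\in\Delta$ via the theorem $F\to A$. Since $\Gamma,\Delta$ and $A$ were arbitrary, $\langle\Gamma,\Delta\rangle\in R$ always holds, so $R$ is universal and $(\mathcal{MC},R)$ is a frame with a universal relation.

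The genuine content has already been isolated into Lemma~\ref{lemmaBOXinWc}\eqref{iii} (built on axiom \eqref{6} and the $\mathsf{S5}$ law \eqref{1a}) and into the normal-form Lemma~\ref{form}; the present argument is mainly bookkeeping. The one point that must be executed carefully is the reduction $\BOX A\rightsquigarrow\bigwedge_i\BOX D_i$: one has to be sure the distribution of $\BOX$ over finite conjunctions and the replacement of provable equivalents under $\BOX$ are available in $\FOLS$ and transfer into the maximal consistent sets through \eqref{gammaAM}, so that Lemma~\ref{lemmaBOXinWc}\eqref{iii}, which is stated only for elementary disjunctions, can be leveraged for an \emph{arbitrary} formula $A$.
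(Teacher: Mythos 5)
Your proof is correct and follows essentially the same route as the paper: reduce $A$ to its conjunctive form via Lemma~\ref{form}, distribute $\BOX$ over the conjunction using normality, transfer each $\BOX D_{i}$ from $\Gamma$ to $\Delta$ via Lemma~\ref{lemmaBOXinWc}\eqref{iii}, and conclude with $(\mathtt{T})$. The only cosmetic difference is that the paper argues by contradiction (isolating a single $D_{i}\notin\Delta$) whereas you argue directly, which changes nothing of substance.
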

\begin{proof}We assume indirectly that $R$ is not universal. Let $\BOX A \in \Gamma$ and $A \notin \Delta$ for some $\Gamma, \Delta \in \mathcal{MC}$. According to Lemma \ref{form}, we know that $A$ has an equivalent conjunctive form consisting of elementary disjunctions, denoted as $D_{1} \land \dots \land D_{n}$. Since $\BOX$ is a normal modality, $\BOX A \in \Gamma$ gives $\BOX D_{1} \land \dots \land \BOX D_{n} \in \Gamma$. From the second assumption, we have $\neg D_{1} \lor \dots \lor \neg D_{n} \in \Delta$. Therefore, by Lemma \ref{lemmaconsistentset1}, there is an elementary disjunction $D_{i}$ such that $D_{i} \notin \Delta$. We already have $\BOX D_{i} \in \Gamma$. Using \eqref{iii} of Lemma \ref{lemmaBOXinWc}, we conclude that $\forall_{\zeta \in \mathcal{MC}} \BOX D_{i} \in \zeta$, since $D_{i}$ is an elementary disjunction. Thus, $\BOX D_{i} \in \Delta$ and consequently $D_{i} \in \Delta$, which means that $\Delta$ is not consistent, since $D_{i}\notin \Delta$.
\end{proof}
We did not find quantifier modal logics where an analog of the above lemma would hold. \smallskip\\ 
As an immediate consequence of the above lemma, we obtain that
\begin{lemma}\label{lemmamaxBOX2}
For any $A\in \mathcal{L}(\forall, \BOX)$, and $\Gamma,\Delta\in \mathcal{MC}$: $\BOX A\in \Gamma \Longleftrightarrow \BOX A\in\Delta.$
\end{lemma}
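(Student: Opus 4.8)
The plan is to derive Lemma~\ref{lemmamaxBOX2} directly from Theorem~\ref{universalR} by unwinding the definition of the accessibility relation $R$ together with the fact that a universal relation is symmetric (indeed, $R = \mathcal{MC} \times \mathcal{MC}$). Fix an arbitrary $A \in \mathcal{L}(\forall, \BOX)$ and arbitrary $\Gamma, \Delta \in \mathcal{MC}$. By symmetry of the biconditional to be proved, it suffices to show the left-to-right implication $\BOX A \in \Gamma \Longrightarrow \BOX A \in \Delta$.

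First I would argue the simpler direction: if $\BOX A \in \Gamma$, then $A \in \Delta$. Indeed, by Theorem~\ref{universalR} the relation $R$ is universal on $\mathcal{MC}$, so in particular $\langle \Gamma, \Delta \rangle \in R$; by the very definition of $R$ this means $\forall B\, (\BOX B \in \Gamma \Longrightarrow B \in \Delta)$, and instantiating with $B := A$ gives $A \in \Delta$. To upgrade $A \in \Delta$ to $\BOX A \in \Delta$ I would use the $\mathsf{S5}$ strength of $\BOX$, specifically the axiom~$(\mathtt 5)$ (in its contrapositive/boxed form $\BOX A \to \BOX\BOX A$, which is derivable from $(\mathtt K)$, $(\mathtt T)$, $(\mathtt 5)$): since $\FOLS \subseteq \Gamma$ by~\eqref{gammaAM} and $\BOX A \to \BOX \BOX A \in \FOLS$, we get $\BOX\BOX A \in \Gamma$ from $\BOX A \in \Gamma$ by~\eqref{gammato} and~\eqref{gammaneg}. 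Now apply the universality of $R$ again to the pair $\langle \Gamma, \Delta\rangle$ with the formula $\BOX A$ in the role of $B$: from $\BOX(\BOX A) \in \Gamma$ we conclude $\BOX A \in \Delta$, as desired.

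The converse implication $\BOX A \in \Delta \Longrightarrow \BOX A \in \Gamma$ is then immediate by the same argument with the roles of $\Gamma$ and $\Delta$ interchanged, which is legitimate precisely because a universal relation is symmetric (so $\langle \Delta, \Gamma\rangle \in R$ as well). This closes the biconditional for arbitrary $\Gamma, \Delta \in \mathcal{MC}$ and arbitrary $A$.

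I do not expect a genuine obstacle here: the content of the lemma is already fully packaged inside Theorem~\ref{universalR}, and the only thing that needs care is the step from $A \in \Delta$ to $\BOX A \in \Delta$, which is where the $\mathsf{S5}$-style transitivity/euclideanness of $\BOX$ is actually used rather than mere universality of $R$. An alternative, even shorter route would be to observe that universality of $R$ makes the canonical frame a single $\mathsf{S5}$-cluster, so that the standard canonical-model truth lemma for $\mathsf{S5}$ forces $\BOX$-formulas to be constant across all worlds; but since the truth lemma has not yet been proved at this point in the paper, the self-contained argument above via $(\mathtt 5)$ and two applications of the definition of $R$ is the cleaner one to present.
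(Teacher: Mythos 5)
Your argument is correct and matches the paper's approach: the paper gives no explicit proof, saying only that the lemma is an immediate consequence of Theorem~\ref{universalR}, and your route --- universality of $R$ combined with the $\mathsf{S5}$-derivable thesis $\BOX A \to \BOX\BOX A$ applied inside $\Gamma$ via \eqref{gammaAM} and \eqref{gammato} --- is exactly the standard way to make that step explicit. (Your intermediate observation that $A \in \Delta$ is never actually used and could be dropped.)
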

\noindent Now, for any $\Gamma \in \mathcal{MC}$, we define the relation $\sim_{\Gamma} = \{\langle x,y \rangle \colon x=y \in \Gamma\}$, which is equivalence relation by axioms for identity and construct the canonical structure $\A^{\mathcal{MC}} = \langle \mathcal{MC}, \{\D_{\Gamma}\}_{\Gamma\in \mathcal{MC}}, I^{\mathcal{MC}}\rangle$ as usual, i.e., $\D_{\Gamma}=\{[x]_{\sim_{\Gamma}}\colon x\in Var\}$, and next $I^{\mathcal{MC}}$, and $v^{\mathcal{MC}}$ on $\A^{\mathcal{MC}}$ are defined respectively. So canonical valuation $v^{\mathcal{MC}}$ in any possible world do not depend on choice of \textit{representatives} for the equivalence classes. \smallskip\\
Now, in non trivial case $A=\BOX B$ we use axiom  \eqref{mix}, and inductively prove 
\begin{lemma} $\A^{\mathcal{MC}}, v^{\mathcal{MC}},\Gamma \models A \Longleftrightarrow  A\in \Gamma$, for any $A\in \LFB $ and $\Gamma\in \mathcal{MC}$.
\end{lemma}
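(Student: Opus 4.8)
The plan is to prove the Truth Lemma by induction on the structure of $A$, using the standard Henkin-style argument for the Boolean, quantifier, and modal cases, with the non-trivial work concentrated in the $\BOX$ case. First I would set up the atomic base case: $\A^{\mathcal{MC}}, v^{\mathcal{MC}}, \Gamma \models P^n_j(x_1,\dots,x_n)$ iff $\langle [x_1]_{\sim_\Gamma},\dots,[x_n]_{\sim_\Gamma}\rangle \in I^{\mathcal{MC}}(P^n_j, \Gamma)$, which holds iff $P^n_j(x_1,\dots,x_n)\in\Gamma$ by the definition of $I^{\mathcal{MC}}$ (and well-definedness follows from axiom (=) together with the fact that $\sim_\Gamma$ respects atomic formulas, since identity substitution is allowed for $\BOX$-free formulas). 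The $x=y$ case is immediate from the definition of $\sim_\Gamma$ and the identity axioms. The $\neg$ and $\to$ cases use \eqref{gammaneg} and \eqref{gammato} from Lemma \ref{lemmaconsistentset1} in the usual way.

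Next I would handle the quantifier case $A = \forall x B$. The direction $\forall x B \in \Gamma \Rightarrow \A^{\mathcal{MC}}, v^{\mathcal{MC}}, \Gamma \models \forall x B$ uses axiom \eqref{forall1}: for any variable $y$, $\forall x B \to B(^y/_x) \in \FOLS \subseteq \Gamma$, so $B(^y/_x)\in\Gamma$, and by the inductive hypothesis $\A^{\mathcal{MC}}, v^{\mathcal{MC}}, \Gamma \models B(^y/_x)$; since every element of $\D_\Gamma$ is $[y]_{\sim_\Gamma}$ for some $y\in Var$, and the canonical valuation does not depend on the representative, this gives satisfaction of $\forall x B$ — here one must check that $v^{\mathcal{MC}}$ shifted to send $x$ to $[y]_{\sim_\Gamma}$ agrees with the valuation induced by $B(^y/_x)$, which is where $\forall\BOX$-free substitution being the ``right'' notion matters. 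The converse direction uses $\exists$-saturation of $\Gamma$: if $\forall x B\notin\Gamma$ then $\neg\forall x B\in\Gamma$, so there is a $y$ with $\neg B(^y/_x)\in\Gamma$, hence by the inductive hypothesis $\A^{\mathcal{MC}}, v^{\mathcal{MC}}, \Gamma \not\models B(^y/_x)$, and the same substitution bookkeeping yields a witness falsifying $\forall x B$.

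The modal case $A = \BOX B$ is where I expect the main obstacle, and it is also where the paper's structural results pay off. For the forward direction, suppose $\BOX B\in\Gamma$; by \eqref{mix} we get $\forall \pmb{x} B\in\Gamma$ and by $\mathtt{T}$ we get $B\in\Gamma$, but more importantly, by Lemma \ref{lemmamaxBOX2} we have $\BOX B\in\Delta$ for every $\Delta\in\mathcal{MC}$, hence $B\in\Delta$ for all $\Delta$ (again by $\mathtt{T}$), so by the inductive hypothesis $\A^{\mathcal{MC}}, v^{\mathcal{MC}}, \Delta \models B$ for every world $\Delta$; since the semantics of $\BOX$ quantifies over all worlds \emph{and all valuations}, I still need that $B$ holds under every valuation $v'$, not just $v^{\mathcal{MC}}$ — this is handled by combining \eqref{mix}/\eqref{1b} (so that $\BOX B$ entails $\forall \pmb{x}\BOX B$ and every valuation shift can be absorbed into moving to a world where the relevant variables are reinterpreted, using that $\D_\Gamma$ consists exactly of variable-classes) together with Lemma \ref{lemmamaxBOX2}. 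For the converse, suppose $\BOX B\notin\Gamma$; by Lemma \ref{lemmamaxBOX2} this means $\BOX B\notin\Delta$ for \emph{every} $\Delta\in\mathcal{MC}$, so $\POS\neg B\in\Gamma$, and by Lemmas \ref{lemmaconsistentset2} and \ref{lemmaconsistentset1} there is $\Delta_0\in\mathcal{MC}$ with $\neg B\in\Delta_0$, hence $B\notin\Delta_0$; by the inductive hypothesis $\A^{\mathcal{MC}}, v^{\mathcal{MC}}, \Delta_0\not\models B$, and since $R$ is universal (Theorem \ref{universalR}) the world $\Delta_0$ is accessible from $\Gamma$, witnessing $\A^{\mathcal{MC}}, v^{\mathcal{MC}}, \Gamma\not\models\BOX B$. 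The delicate point throughout the $\BOX$ case is reconciling the valuation quantifier in the truth clause for $\BOX$ with the single canonical valuation $v^{\mathcal{MC}}$; the paper's hint ``we use axiom \eqref{mix}'' indicates that \eqref{mix} (and its consequences \eqref{1b}, \eqref{1c}) is exactly what lets every valuation variant be simulated by passing to a suitable world, so that quantifying over worlds alone suffices.
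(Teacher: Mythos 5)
Your overall plan matches the paper's (very terse) indication: a Henkin-style induction in which the only non-routine case is $A=\BOX B$, handled via \eqref{mix}. The atomic, identity, Boolean, and quantifier cases, and the converse direction of the $\BOX$ case, are fine. But there is a genuine gap in the forward direction of the $\BOX$ case, precisely at the point you flag as ``delicate'': the truth clause for $\BOX$ quantifies over \emph{all} valuations $v'$ on $\A^{\mathcal{MC}}$, while your inductive hypothesis speaks only of $v^{\mathcal{MC}}$. Your proposed resolution --- that ``every valuation shift can be absorbed into moving to a world where the relevant variables are reinterpreted'' --- does not work as stated: passing to a different world $\Delta'$ changes the domain $\D_{\Delta'}$ and the interpretation $I^{\mathcal{MC}}(\cdot,\Delta')$ of the predicates, not the assignment of variables within a fixed world, so world-shifting cannot simulate a valuation variant at $\Delta$. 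Neither \eqref{mix} nor \eqref{1b} licenses that identification by itself.

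The step that is actually needed, and that you must supply, is a substitution argument at a \emph{fixed} world. Since $\D_{\Delta}=\{[y]_{\sim_{\Delta}}\colon y\in Var\}$, any valuation $v'$, restricted to the finitely many variables occurring $\forall\BOX$-free in $B$ and to the world $\Delta$, agrees with $v^{\mathcal{MC}}$ composed with a variable renaming $\sigma$; one then needs a substitution lemma stating $\A^{\mathcal{MC}}, v', \Delta \models B \Longleftrightarrow \A^{\mathcal{MC}}, v^{\mathcal{MC}}, \Delta \models B\sigma$, where $\sigma$ is a $\forall\BOX$-free substitution (this is exactly where the paper's non-standard substitution is the right notion, because the truth value of a subformula $\BOX C$ is valuation-independent). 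With that in hand, $\BOX B\in\Gamma$ gives $\BOX B\in\Delta$ by Lemma \ref{lemmamaxBOX2}, hence $\forall\pmb{x}B\in\Delta$ by \eqref{mix}, hence $B\sigma\in\Delta$ for every such renaming by \eqref{forall1}, and the inductive hypothesis applied to $B\sigma$ (which forces the induction to run on formula complexity rather than on subformulas, since $B\sigma$ is not a subformula of $\BOX B$) closes the case. Without the substitution lemma and the adjusted induction measure, the argument does not go through; with them, it does, and it is consistent with the paper's hint that \eqref{mix} carries the load.
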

\noindent Using proved lemmas its easy to prove that
\begin{theorem}\label{completness}
If $A$ is logically valid, then $A \in \FOLS$.
\end{theorem}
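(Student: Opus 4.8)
The plan is to prove completeness by the standard canonical-model contrapositive, using the lemmas just established. Suppose $A \notin \FOLS$. Then $\{\neg A\}$ is $\FOLS$-consistent, so by Lemma~\ref{lemmaconsistentset1} there is an $\exists$-saturated maximal $\FOLS$-consistent set $\Gamma \in \mathcal{MC}$ with $\neg A \in \Gamma$, hence $A \notin \Gamma$ by \eqref{gammaneg}. First I would invoke the truth lemma (the unnumbered lemma stating $\A^{\mathcal{MC}}, v^{\mathcal{MC}}, \Gamma \models A \Longleftrightarrow A \in \Gamma$): from $A \notin \Gamma$ we get $\A^{\mathcal{MC}}, v^{\mathcal{MC}}, \Gamma \not\models A$. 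So $A$ is falsified in the canonical structure under the canonical valuation at the world $\Gamma$.

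The remaining — and genuinely essential — step is to check that $\A^{\mathcal{MC}} \in \U\C$, i.e.\ that the canonical structure is \emph{universal} in the sense of condition~\eqref{!}. Without this, falsifiability in $\A^{\mathcal{MC}}$ does not contradict logical validity, since logical validity quantifies only over $\U\C$. To verify \eqref{!}, I would take an arbitrary $\BOX$-free formula $B$ with $\A^{\mathcal{MC}} \models B$ and show $B$ is a first-order thesis, whence $\B \models B$ for every $\B \in \C$. If $B \notin \mathsf{FOL}$, then by axiom schema~\eqref{6} we have $\neg \BOX B \in \FOLS$, so $\neg \BOX B \in \Gamma$ for every $\Gamma \in \mathcal{MC}$ by \eqref{gammaAM}; equivalently $\POS \neg B \in \Gamma$. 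By Lemmas~\ref{lemmaconsistentset2} and~\ref{lemmaconsistentset1} there is a $\Delta_0 \in \mathcal{MC}$ with $\neg B \in \Delta_0$, so $B \notin \Delta_0$, and the truth lemma yields $\A^{\mathcal{MC}}, v^{\mathcal{MC}}, \Delta_0 \not\models B$, contradicting $\A^{\mathcal{MC}} \models B$. Hence $B \in \mathsf{FOL}$, and by soundness of $\mathsf{FOL}$ (or directly, since first-order theses are logically valid over $\C$) we get $\B \models B$ for all $\B \in \C$. This establishes $\A^{\mathcal{MC}} \in \U\C$.

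Putting the two halves together: $\A^{\mathcal{MC}} \in \U\C$ and $\A^{\mathcal{MC}}, v^{\mathcal{MC}}, \Gamma \not\models A$, so $A$ is not logically valid. Contraposing gives the theorem. I expect the main obstacle to be exactly the universality check $\A^{\mathcal{MC}} \in \U\C$: this is the non-routine part that distinguishes this completeness argument from a textbook $\mathsf{S5}$ one, and it is where axiom~\eqref{6} and the uniformity lemmas (Lemma~\ref{lemmaBOXinWc}, Lemma~\ref{lemmamaxBOX2}) do the real work. The rest — building the Lindenbaum extension, invoking the truth lemma — is standard, with the only care needed being that we use the $\exists$-saturated version of Lindenbaum's lemma so that the truth lemma's quantifier case goes through, and that the canonical accessibility relation is universal (Theorem~\ref{universalR}) so that the $\BOX$ case of the truth lemma matches the semantics, where $\BOX$ quantifies over all worlds.
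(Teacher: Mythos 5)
Your proposal is correct and follows exactly the route the paper intends (the paper itself omits the proof, saying only that it follows from the preceding lemmas): Lindenbaum extension of $\{\neg A\}$, the truth lemma at the canonical world, and the observation that the canonical structure must be shown to lie in $\U\C$ since logical validity is relativized to universal structures. You rightly single out the universality check $\A^{\mathcal{MC}}\in\U\C$ via axiom \eqref{6} as the one non-routine step, and your argument for it is sound.
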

\section{Conclusion}
In the end, we show that the satisfability of $\BOX A$ in any model implies that $A\in \FOLS$. The final theorem demonstrates that identifying the modality $\BOX$ with the operator of being a thesis (and only a thesis) is substantively correct.
\begin{theorem}
If there is a model on universal structure $\A\in \U\C$ such that $\A,w,v \models \BOX A$ then $A\in \FOLS$, for any formula $A$.
\end{theorem}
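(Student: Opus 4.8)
The plan is to obtain this from the completeness theorem (Theorem~\ref{completness}): it is enough to show that $A$ is logically valid, and the real work is to transfer the truth of $A$ from the single structure $\A$ to \emph{every} universal structure, which I will do using the conjunctive normal form of Lemma~\ref{form} together with condition~\eqref{!}. First I would unwind the hypothesis: from $\A, w, v \models \BOX A$ and the satisfaction clause for $\BOX$ we get $\A, v', w' \models A$ for all $v'$ and $w'$, i.e.\ $\A \models A$. By Lemma~\ref{form} there is a conjunctive form $D_1 \land \dots \land D_n$ provably equivalent to $A$ in $\FOLS$; by Theorem~\ref{satisfation} this equivalence is logically valid, hence holds in $\A$ (which is universal), so $\A \models D_i$ for each $i$. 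Since $(D_1 \land \dots \land D_n) \to A \in \FOLS$, it suffices to prove that every elementary disjunction $D_i$ lies in $\FOLS$, so fix one and write it as $D = A_0 \lor \POS B \lor \BOX C_1 \lor \dots \lor \BOX C_m$ with $A_0, B, C_1, \dots, C_m$ all $\BOX$-free and with $\A \models D$.

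Next I would exploit the dichotomy, already observed in Section~\ref{formal system}, that for any $\BOX$-free formula $E$ one has $\BOX E \in \FOLS$ or $\neg \BOX E \in \FOLS$ (by~\eqref{6} and~\eqref{genBOX}). Applying it to each $C_j$, and to $\neg B$, gives that each $\BOX C_j$ is either in $\FOLS$ or has its negation in $\FOLS$, and likewise for $\POS B = \neg\BOX\neg B$. If at least one of $\POS B, \BOX C_1, \dots, \BOX C_m$ is already in $\FOLS$, then $D \in \FOLS$, since $D$ follows from that disjunct by a tautology instance. Otherwise $\neg\POS B, \neg\BOX C_1, \dots, \neg\BOX C_m \in \FOLS$; by Theorem~\ref{satisfation} these are logically valid, hence true under every valuation and world in $\A$, so from $\A \models D$ we get $\A \models A_0$. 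Now $A_0$ is $\BOX$-free and $\A \in \U\C$, so condition~\eqref{!} yields $\B \models A_0$ for every $\B \in \C$; in particular $A_0$ is logically valid, and Theorem~\ref{completness} gives $A_0 \in \FOLS$, whence $D \in \FOLS$ once more. Thus in every case $D_i \in \FOLS$, so $D_1 \land \dots \land D_n \in \FOLS$, and therefore $A \in \FOLS$.

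I expect the main obstacle to be the treatment of the modalized disjuncts $\POS B, \BOX C_1, \dots, \BOX C_m$ of an elementary disjunction: the key point is that each of them is either outright a thesis of $\FOLS$ — in which case the whole disjunction is trivially a thesis — or provably false, so that the satisfaction of $D$ in $\A$ collapses onto the satisfaction of its $\BOX$-free residue $A_0$; it is precisely at that last step that universality of $\A$, via~\eqref{!}, together with completeness, lets one push $A_0$, hence $D$, hence $A$, back into $\FOLS$.
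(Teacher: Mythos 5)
Your proof is correct, but it takes a genuinely different route from the paper's. The paper argues indirectly through the canonical frame: from $A \notin \FOLS$ it extracts (via the Lindenbaum construction underlying Theorem~\ref{completness}) a set $\Delta \in \mathcal{MC}$ with $A \notin \Delta$, takes $\Gamma$ to be the set of all formulas true at $(\A, v, w)$ — maximal consistent by soundness, so $\Gamma \in \mathcal{MC}$ and $\BOX A \in \Gamma$ — and then contradicts Theorem~\ref{universalR}, the universality of $R$ on $\mathcal{MC}$. You instead argue directly and never leave the structure $\A$: you pass to the conjunctive form of Lemma~\ref{form}, split each elementary disjunction by the dichotomy ``$\BOX E \in \FOLS$ or $\neg\BOX E \in \FOLS$'' coming from \eqref{6} and \eqref{genBOX}, and invoke \eqref{!} plus Theorem~\ref{completness} only for the $\BOX$-free residue $A_0$. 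In effect you re-prove the content of Theorem~\ref{universalR} semantically inside $\A$ rather than citing it for the canonical frame. The paper's route is shorter given the machinery already in place, but yours has two advantages: it makes visible exactly where the universality condition \eqref{!} on $\A$ is used (in the paper's proof this is hidden inside the claim that $\Gamma \in \mathcal{MC}$), and it avoids having to check that the set of formulas true at a point of an arbitrary universal structure is $\exists$-saturated, which membership in $\mathcal{MC}$ formally requires and which the paper does not address. Both arguments rest on the same pillars — Lemma~\ref{form}, Theorem~\ref{satisfation}, and Theorem~\ref{completness} — so neither is more economical in its prerequisites.
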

\begin{proof}
We assume indirectly that there is a formula $A$, a structure $\mathcal{A} \in \U\C$, a valuation $v$, and a world $w$ from the domain of possible worlds of $\mathcal{A}$ such that $\mathcal{A}, v, w \models \BOX A$, and that $A \notin \FOLS$. Using the second assumption and the completeness of $\FOLS$ from Theorem \ref{completness} we have that there is a maximal consistent set $\Delta \in \mathcal{MC}$ such that $A \notin \Delta$. We define a set $\Gamma = \{B \in \LFB \colon \mathcal{A}, v, w \models B\}$ that is maximal consistent, i.e., $\Gamma \in \mathcal{MC}$. The defined maximal consistent set, with the first assumption, gives $\BOX A \in \Gamma$. However, $\BOX A \in \Gamma$ and $A \notin \Delta$ cannot both be true because of Theorem \ref{universalR} on Krpike frame from $\mathcal{MC}$. 
\end{proof}

\end{document}